%
\documentclass[12pt, reqno]{amsart}
\usepackage{amsmath, amsthm, amscd, amsfonts, amssymb, graphicx, color}
\usepackage[bookmarksnumbered, colorlinks, plainpages]{hyperref}

\textheight 22.5truecm \textwidth 14.5truecm
\setlength{\oddsidemargin}{0.35in}\setlength{\evensidemargin}{0.35in}

\setlength{\topmargin}{-.5cm}

\def\C{\mathbb C}

\def\R{\mathbb R}
\newfont{\hueca}{msbm10}

\newtheorem{theorem}{Theorem}[section]
\newtheorem{lemma}[theorem]{Lemma}
\newtheorem{proposition}[theorem]{Proposition}

\theoremstyle{definition}
\newtheorem{definition}[theorem]{Definition}
\newtheorem{example}[theorem]{Example}

\theoremstyle{remark}

\numberwithin{equation}{section}

\begin{document}
\setcounter{page}{1}

\title[Graded pseudo-$H$-rings]{Graded pseudo-$H$-rings}

\author[A.J. Calder\'on, A.  D\'{i}az, M. Haralampidou, J. M. S\'anchez]{Antonio Jes\'us
Calder\'on Mart\'{\i}n$^{1^*}$, Antonio D\'{i}az Ramos$^2$,
Marina Haralampidou$^3$ and Jos\'e Mar\'{i}a S\'anchez
Delgado$^4$}

\address{$^{1}$ Department of Mathematics\\University of C\'adiz\\
Campus de Puerto Real, 11510, C\'adiz, Spain.}
\email{\textcolor[rgb]{0.00,0.00,0.84}{ajesus.calderon@uca.es}}

\address{$^{2}$ Department of Algebra, Geometry and Topology\\University of M\'alaga\\
Campus de Teatinos, 29080, M\'alaga, Spain.}
\email{\textcolor[rgb]{0.00,0.00,0.84}{adiaz@agt.cie.uma.es}}

\address{$^{3}$ Department of Mathematics\\University of Athens\\
Panepistimioupolis, 15784, Athens, Greece.}
\email{\textcolor[rgb]{0.00,0.00,0.84}{mharalam@math.uoa.gr}}

\address{$^{4}$ Department of Mathematics\\University of C\'adiz\\
Campus de Puerto Real, 11510, C\'adiz, Spain.}
\email{\textcolor[rgb]{0.00,0.00,0.84}{txema.sanchez@uca.esr}}


\subjclass[2010]{Primary 46H10; Secondary 46H20, 13A02, 46C05.}

\keywords{Topological ring,  pseudo-$H$-ring, graded
pseudo-$H$-ring, graded ideal, graded simple.}

\date{Received: xxxxxx; Revised: yyyyyy; Accepted: zzzzzz.
\newline \indent $^{*}$ Corresponding author}

\begin{abstract}
Consider a pseudo-$H$-space $E$ endowed with a separately
continuous biadditive associative multiplication which induces a
grading on $E$ with respect to an abelian group $G$. We call such
a space a graded pseudo-$H$-ring and we show that it has the form
$E = cl(U + \sum_j I_j)$ with $U$ a closed subspace of $E_1$ (the
summand associated to the unit element in $G$), and any $I_j$ runs
over a well described closed graded ideal of $E$, satisfying
$I_jI_k = 0$ if $j \neq k$. We also give a context in which graded
simplicity of $E$ is characterized. Moreover, the second
Wedderburn-type theorem is given for certain graded
pseudo-$H$-rings.
\end{abstract} \maketitle

\section{Introduction and Preliminaries}

In this paper we start considering the notion of a
\emph{pseudo-$H$-space}, that is, a real or complex vector space
$E$ equipped with a a family $(\langle  \cdot, \cdot
\rangle_{\alpha})_{\alpha \in I}$ of positive semi-definite
(pseudo)-inner products. We endow $E$ with the initial topology
with respect to the family of seminorms $(p_{\alpha})_{\alpha \in
I}$, where $p_{\alpha}(x):= \sqrt{\langle x,x \rangle_{\alpha}}$
for $\alpha\in I$ and $x \in E$. Thus $E$ becomes a locally convex
space (see \cite[p. 456, Definition 3.1]{Marina4}).

We assume that every \emph{pseudo-$H$-space} $E$ is Hausdorff and
complete. The former condition may be stated as follows: if $x\in
E$ satisfies $p_\alpha(x)=0$ for each $\alpha\in I$ then $x=0$.
The latter condition means that each Cauchy net in $E$ is
convergent. A \emph{pseudo-$H$-ring} is a pseudo-$H$-space endowed
with a biadditive associative multiplication separately
continuous, i.e., the endomorphisms of $E$ given by $x\mapsto xy$
and $x\mapsto yx$ are continuous for all $y \in E$. For any subset
$S$ of $E$ we shall denote its closure by $cl(S)$.

\begin{example}\label{ex1}
Let $I$ and $J$ be two arbitrary nonempty sets of  elements.
Consider the set $\mathbb{C}^{(I\times J) \times (I\times J)} $ of
all complex-valued functions $a$ on $(I \times J)\times (I \times
J) $ such that
\begin{itemize}
\item[(i)] $a((i,j),(l,m))=0$ when $j \neq m$ and
\item[(ii)]$\sum\limits_{i,k \in I; j \in J} |a((i,j),(k,j))|^2
\in \mathbb{R}_+$.
\end{itemize}

The latter, endowed with ``point-wise'' defined operations becomes
a vector space and an algebra with ``{\em matrix}'' multiplication
$$(ab)((i,j),(l,m))=\sum_{(k,s) \in I \times J} a((i,j),(k,s)) b((k,s),(l,m)),$$
for all $a, b \in \mathbb{C}^{(I\times J) \times (I\times J)}$.
Take a family of real numbers $(t_\alpha)_{\alpha \in \Lambda}$,
such that $t_\alpha \geq 1 $. For each $\alpha \in \Lambda $, the
mapping $ \langle  \cdot , \cdot \rangle_{\alpha} :
\mathbb{C}^{(I\times J) \times (I\times J)} \times
\mathbb{C}^{(I\times J) \times (I\times J)} \rightarrow
\mathbb{C}$ given by
$$\langle  a , b \rangle_{\alpha}= t_\alpha \sum\limits_{(i,j),(l,m) \in I \times J}
a((i,j),(l,m))\overline{b}((i,j),(l,m))$$ defines a pseudo-inner
product on $\mathbb{C}^{(I\times J) \times (I\times J)} $, where
``$-$'' denotes complex conjugation. Thus $E:=(
\mathbb{C}^{(I\times J) \times (I\times J)}, (\langle  \cdot ,
\cdot \rangle_{\alpha})_{\alpha \in \Lambda})$ becomes a locally
convex pseudo-$H$-ring.
\end{example}




Two elements $x,y$ in a pseudo-$H$-space $E$ are called
\emph{orthogonal} if $\langle x,y \rangle_{\alpha} = 0$ for all
$\alpha \in I.$ The \emph{orthogonal} set $S^{\perp}$ of a
non-empty subset $S$ in $E$ is defined by $$S^{\perp} := \{x \in
E: \langle x,y \rangle_{\alpha} = 0\textit{ for all } y \in S
\textit{ and all } \alpha \in I\}.$$ It is a closed linear
subspace of $E$. The symbol $\oplus^\bot$ shall denote {\it
orthogonal} direct sum, that is, a direct sum of mutually
orthogonal linear subspaces.



\begin{definition}\label{definition1}
Let $E$ be a  pseudo-$H$-ring, over ${\mathbb K}={\mathbb R}$ or
${\mathbb C}$,  and let $G$ be an
 abelian group. We say that $E$ is a {\it graded
pseudo-$H$-ring} (with respect to $G$) if
$$E = cl( \sideset{}{^\bot}\bigoplus\limits_{g \in G} E_g),$$
where $E_g$ is a closed linear subspace satisfying $E_gE_h \subset
E_{gh}$ (denoting by juxtaposition the product both in $E$ and
$G$), for any $g,h \in G$. We define the {\it support} of the
grading to be the set $\Sigma := \{g \in G \setminus \{1\} : E_g
\neq 0\}.$
\end{definition}


\medskip

Graded Hilbert spaces, and therefore graded classical
$H^*$-algebras,  and graded $l_2({\mathcal G})$ algebras, where
$\mathcal{G}$ is a compact topological group, are examples of
graded pseudo-$H$-rings (see \cite{Ambrose, Dix, Mallios}). Let us
also endow the family of pseudo-$H$-rings in Example \ref{ex1} of
different  gradings.

\begin{example}\label{ex2}
Consider the pseudo-$H$-ring $E= ( \mathbb{C}^{(I \times J)\times
(I \times J)}, (\langle  \cdot , \cdot \rangle_{\alpha})_{\alpha
\in \Lambda})$ of Example \ref{ex1}. Let  us fix an arbitrary
abelian group $G$. For any $((i,j), (k,j))$, $i,k\in I$ and $j \in
J$, denote $a_{((i,j), (k,j))}:(I \times J)\times (I \times J) \to
\mathbb{C}$ by
$$a_{((i,j), (k,j))}((l,m), (n,s)):=\left\{%
\begin{array}{ll}
    1, & \hbox{if $((l,m), (n,s))=((i,j), (k,j))$;} \\
    0, & \hbox{ otherwise.} \\
\end{array}%
\right.$$ the element units  in  $E$.     We have that any
function
$$\phi:I \times J \to G$$ gives rise to a $G$-grading on $E$ given by
$$\hbox{$\mathbb{C}a_{((i,j), (k,j))} \subset E_g$ if and only if $g=
\phi(i,j)^{-1}\phi(k,j)$}.$$ Indeed, taking into account
$a_{((i,j),(k,j))}a_{((m,l),(n,l))}=0$ for $(k,j)\neq (m,l)$, and
$$\phi(i,j)^{-1} \phi(k,j)\phi(k,j)^{-1} \phi(n,j) = \phi(i,j)^{-1}
\phi(n,j),$$ the above condition clearly defines the grading $E =
cl(\sideset{}{^\bot}\bigoplus\limits_{g \in G}E_g)$ with
\begin{equation}\label{br}
E_g=\sideset{}{^\bot} \bigoplus\mathbb{C}a_{((i,j), (k,j))},
\end{equation}
where the orthogonal direct sum is taken over all $i,k\in I; j\in
J $ with  $$\phi(i,j)^{-1}\phi(k,j)=g.$$
\end{example}

\bigskip

Let $E$ be a graded pseudo-$H$-ring. A {\it graded
pseudo-$H$-subring} $F$ of $E$ is a linear subspace with
$FF\subset F$ and which is decomposed as $F =
cl(\bigoplus^{\bot}\nolimits_{g \in G}F_g)$ where $F_g := F \cap
E_g$. A {\it graded ideal} $I$ of $E$ is a graded
pseudo-$H$-subring satisfying $IE\subset I$ and $EI \subset I$. A
graded pseudo-$H$-ring $E$ shall be called {\it graded simple} if
its product is nonzero and its only graded
ideals are $(0)$ and $E$. 
\medskip

In this work we study graded pseudo-$H$-rings $E$. In Section
\ref{section:Decompositions}, we give a particular decomposition
of $E$  as  $E = cl(U + \sum\limits_jI_j)$ with $U$ a closed
subspace of $E_1$ (the summand  associated to the unit element in
$G$), and any $I_j$ a well described closed graded ideal of $E$,
satisfying $I_jI_k = 0$ if $j \neq k$;. Then, in Section
\ref{section:simplecomponents} we  give a context in which graded
simplicity of $E$ is characterized. Moreover, a second
Wedderburn-type theorem is given for certain graded
pseudo-$H$-rings.

\smallskip

In the next lemma, by a {\em topological ring} we mean a
topological vector space which is a ring, such that the ring
multiplication is separately continuous.

\begin{lemma}\label{lema_util}
Let $E$ be a topological ring.
\begin{enumerate}
\item[(i)]\label{lema_utilclABclC} If $A,B,C$ are subsets of $E$
with $AB \subseteq C$, then $cl(A)cl(B) \subseteq cl(C)$.
\end{enumerate}
If, in addition $E$ is Hausdorff and complete, then the following
hold:
\begin{enumerate}
\item[(ii)]\label{lema_utilA+Bclosed} If $A$ and $B$ are
orthogonal closed subspaces of $E$, then $A \oplus B$ is closed.
\item[(iii)]\label{lema_utilcl(A+B)=A+cl(B)} If $A$ and $B$ are
orthogonal subspaces with $A$ closed, then $cl(A\oplus B) =
A\oplus cl(B)$.
\end{enumerate}
\end{lemma}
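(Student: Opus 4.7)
The plan is to handle the three parts in order, with (i) resting purely on separate continuity, (ii) using the Pythagorean identity together with completeness, and (iii) following from (ii) by a short set-theoretic argument.

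For (i), I take $x \in cl(A)$ and $y \in cl(B)$ and pick nets $(x_\lambda)\subseteq A$ with $x_\lambda \to x$ and $(y_\mu)\subseteq B$ with $y_\mu \to y$. Separate continuity of the multiplication is applied in two stages: first, for each fixed $\mu$, the map $z\mapsto z y_\mu$ is continuous, so $x_\lambda y_\mu \to x y_\mu$; since $x_\lambda y_\mu \in AB \subseteq C$, this gives $x y_\mu \in cl(C)$. Second, the map $z\mapsto x z$ is continuous, so $x y_\mu \to xy$, and since $cl(C)$ is closed, $xy \in cl(C)$.

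For (ii), the key is the Pythagorean identity for the pseudo-inner products: if $a\in A$ and $b\in B$ are orthogonal, then $p_\alpha(a+b)^2 = p_\alpha(a)^2 + p_\alpha(b)^2$ for every $\alpha \in I$. Let $(a_\lambda + b_\lambda)$ be a net in $A\oplus B$ converging to some $x\in E$; in particular it is Cauchy. Applying Pythagoras to $(a_\lambda - a_{\lambda'}) + (b_\lambda - b_{\lambda'})$ yields
\[
p_\alpha(a_\lambda - a_{\lambda'})^2 + p_\alpha(b_\lambda - b_{\lambda'})^2 \;=\; p_\alpha\bigl((a_\lambda + b_\lambda) - (a_{\lambda'} + b_{\lambda'})\bigr)^2,
\]
so both $(a_\lambda)$ and $(b_\lambda)$ are Cauchy with respect to every seminorm, hence Cauchy in $E$. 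Completeness and the fact that $A$ and $B$ are closed give $a_\lambda \to a \in A$ and $b_\lambda \to b \in B$, and therefore $x = a+b \in A\oplus B$. I expect this to be the main technical step, as it is the only place where completeness and the pseudo-$H$ structure genuinely interact; one needs to be careful that orthogonality of two fixed elements (not of two whole subspaces in $E$) is what is invoked, and that the direct sum $A\oplus B$ is in fact a direct sum, which is ensured because $x\in A\cap B$ forces $p_\alpha(x)=0$ for all $\alpha$ and hence $x=0$ by the Hausdorff hypothesis.

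For (iii), I first note that $A$ and $cl(B)$ are still orthogonal: if $a\in A$ and $b\in cl(B)$ with $b = \lim b_\mu$, $b_\mu \in B$, then $\langle a, b\rangle_\alpha$ is the limit of $\langle a, b_\mu\rangle_\alpha = 0$ (continuity of the pseudo-inner product in its second argument via the seminorm estimates). Hence by part (ii) applied to $A$ and $cl(B)$, the sum $A\oplus cl(B)$ is closed. Since $A\oplus B \subseteq A\oplus cl(B)$, this gives $cl(A\oplus B) \subseteq A\oplus cl(B)$. For the reverse inclusion, $A \subseteq cl(A\oplus B)$ is immediate, and given $b\in cl(B)$ with $b = \lim b_\mu$, $b_\mu \in B$, one has $a + b_\mu \in A \oplus B$ and $a+b_\mu \to a+b$, so $A + cl(B) \subseteq cl(A\oplus B)$, which completes the proof.
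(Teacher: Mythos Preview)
Your proof is correct and follows essentially the same route as the paper. For (i) the paper simply cites \cite[p.~6, Lemma~1.5]{Mallios}, whereas you spell out the two-step separate-continuity argument; for (ii) and (iii) the arguments coincide, though your version of (ii) is actually more carefully stated---the paper writes only $p_\alpha(a_\lambda)\leq p_\alpha(x_\lambda)$ and then asserts $(a_\lambda)$ is Cauchy, while you correctly apply the Pythagorean identity to the differences $(a_\lambda-a_{\lambda'})+(b_\lambda-b_{\lambda'})$, which is what is really needed.
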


\begin{proof}

(i) See \cite[p. 6, Lemma 1.5]{Mallios}.

(ii) Let $A$ and $B$ be orthogonal closed subspaces and let
$(x_\lambda)_{\lambda \in \Lambda}$ be a net in $A\oplus B$
converging to $x_0$. For each $\lambda \in \Lambda$ we may write
$x_\lambda=a_\lambda+b_\lambda$ for unique elements $a_\lambda$
and $b_\lambda$ belonging to $A$ and $B$ respectively. For any
$\alpha\in I$, and as $A$ and $B$ are orthogonal, we have $\langle
x_\lambda,x_\lambda\rangle_\alpha=\langle
a_\lambda,a_\lambda\rangle_\alpha+\langle
b_\lambda,b_\lambda\rangle_\alpha.$ We deduce that $\langle
a_\lambda,a_\lambda\rangle_\alpha\leq \langle
x_\lambda,x_\lambda\rangle_\alpha$ and that $\langle
b_\lambda,b_\lambda\rangle_\alpha\leq \langle
x_\lambda,x_\lambda\rangle_\alpha$. Then $(a_\lambda)_{\lambda \in
\Lambda}$ and $(b_\lambda)_{\lambda \in \Lambda}$ are Cauchy nets
in $A$ and $B$ and, by the completeness of $E$, they converge to
$a_0\in A$ and $b_0\in B$ respectively. As $E$ is Hausdorff we
have $x_0=a_0+b_0$ and we are done.

(iii) We start by noticing that $A\oplus cl(B)\subseteq cl(A\oplus
B)$. By (ii) above, we get that $cl(A\oplus B)\subseteq A\oplus
cl(B)$. It is left to prove that the sum $A\oplus cl(B)$ is indeed
orthogonal. Take $a\in A$ and $c\in cl(B)$. Fix $\alpha\in I$. For
any $\epsilon \rangle0$ choose $b\in B$ such that
$p_\alpha(c-b)\langle \epsilon$. Then we have
$$\langle  a,c\rangle_\alpha=\langle  a,c-b+b\rangle_\alpha=\langle  a,c-b\rangle_\alpha\leq p_\alpha(a)p_\alpha(c-b)\langle \epsilon p_\alpha(a).$$
As this is true for each $\epsilon \rangle0$ we get that $\langle
a,c\rangle_\alpha=0$. So, by (ii), $A\oplus cl(B)$ is closed and
hence, $A\oplus cl(B)\subseteq A\oplus B \subseteq A\oplus cl(B)$.
Thus, the assertion follows.
\end{proof}


\section{Decompositions}\label{section:Decompositions}

From now on, $E$ denotes a graded pseudo-$H$-ring over $\R$ or
$\C$  and $$E = cl(\sideset{}{^\bot}\bigoplus\limits_{g\in G}E_g)
= E_1 \oplus cl(\sideset{}{^\bot}\bigoplus\limits_{g\in
\Sigma}E_g)$$ the corresponding grading, with support $\Sigma$,
and with respect to an abelian (multiplicative) group $G$.  Let us
denote by ${\Sigma}^{-1}:=\{h^{-1}: h \in \Sigma\} \subset G$.

\begin{definition}\label{def:connected}
Let $g, h$ be elements in $\Sigma$. We shall say that $g$ is {\em
connected} to $h$ if there exist $g_1,g_2...,g_n \in \Sigma \cup
{\Sigma}^{-1}$ such that
\begin{enumerate}
\item[(i)] $g_1 = g$. \item[(ii)] $\{g_1,g_1g_2,...,g_1g_2 \cdots
g_{n-1}\} \subset \Sigma \cup {\Sigma}^{-1}.$ \item[(iii)] $g_1g_2
\cdots g_{n-1}g_n \in \{h, h^{-1}\}$.
\end{enumerate}

We shall also say that $\{g_1,...,g_n\}$ is a {\em connection}
from $g$ to $h$.
\end{definition}


The next result shows that connectioness is an equivalence
relation.

\begin{proposition}\label{pro1}
Let $E$ be a graded  pseudo-$H$-ring with  support $\Sigma$. Then,
the relation $\sim$ in $\Sigma$, defined by $g \sim h$ if and only
if $g$ is connected to $h$, is an equivalence one.
\end{proposition}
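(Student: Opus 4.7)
My plan is to verify reflexivity, symmetry and transitivity directly from Definition \ref{def:connected}, relying only on the fact that $\Sigma\cup\Sigma^{-1}$ is stable under inversion and that $G$ is abelian. Reflexivity is immediate: for any $g\in\Sigma$ the singleton $\{g\}$ satisfies (i)--(iii), since $g\in\Sigma\subset\Sigma\cup\Sigma^{-1}$ and trivially $g\in\{g,g^{-1}\}$.

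For symmetry, suppose $\{g_1,\ldots,g_n\}$ is a connection from $g$ to $h$. Since $g_1g_2\cdots g_n\in\{h,h^{-1}\}$, I would split into two cases. If $g_1\cdots g_n=h$, I propose the reversed connection $\{h,\,g_n^{-1},\,g_{n-1}^{-1},\ldots,g_2^{-1}\}$; if $g_1\cdots g_n=h^{-1}$, I propose $\{h,\,g_n,\,g_{n-1},\ldots,g_2\}$. In either case, commutativity of $G$ collapses the $k$-th partial product to (possibly the inverse of) $g_1\cdots g_{n-k+1}$, which lies in $\Sigma\cup\Sigma^{-1}$ by condition (ii) of the original connection together with the inversion-symmetry of that set. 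The terminal product is $g$ or $g^{-1}$, so (iii) holds.

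For transitivity, given connections $\{g_1,\ldots,g_n\}$ from $g$ to $h$ and $\{h_1,\ldots,h_m\}$ from $h$ to $k$, I concatenate them, flipping the sign of the tail if needed: if $g_1\cdots g_n=h$ I take $\{g_1,\ldots,g_n,\,h_2,\ldots,h_m\}$, while if $g_1\cdots g_n=h^{-1}$ I take $\{g_1,\ldots,g_n,\,h_2^{-1},\ldots,h_m^{-1}\}$. Commutativity ensures that each partial product past index $n$ equals (up to inversion) a partial product of the second connection, hence lies in $\Sigma\cup\Sigma^{-1}$; the final product is $h_1\cdots h_m$ or its inverse, so it is in $\{k,k^{-1}\}$.

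The one piece of genuine bookkeeping is the case split according to whether the tail of a connection equals the target or its inverse: in each branch one must choose the correct signs for the reversed or appended sequence so that the final product lands in $\{g,g^{-1}\}$ or $\{k,k^{-1}\}$. Everything else rests on two structural features, namely the commutativity of $G$, which lets one freely cancel and rearrange, and the inversion-symmetry of $\Sigma\cup\Sigma^{-1}$; no topological hypothesis on $E$ plays any role.
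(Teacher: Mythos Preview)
Your proof is correct and follows essentially the same route as the paper: the singleton $\{g\}$ for reflexivity, the reversed sequences $\{h,g_n^{-1},\ldots,g_2^{-1}\}$ or $\{h,g_n,\ldots,g_2\}$ for symmetry according to whether $g_1\cdots g_n$ equals $h$ or $h^{-1}$, and the concatenations $\{g_1,\ldots,g_n,h_2,\ldots,h_m\}$ or $\{g_1,\ldots,g_n,h_2^{-1},\ldots,h_m^{-1}\}$ for transitivity. The only cosmetic difference is that the paper separates out the degenerate case $h\in\{k,k^{-1}\}$ in transitivity, whereas your concatenation formula absorbs it automatically when $m=1$; your explicit appeal to the commutativity of $G$ makes visible what the paper uses tacitly.
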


\begin{proof}
Clearly  the set  $\{g\}$ is a connection from $g$ to itself
 and so
 the relation  is reflexive.

 If $g \sim h$ then
 there exists a connection
$\{g_1,g_2,...,g_n\} $ from $g$ to $h$:
$$\{g_1  g_2, g_1  g_2  g_3, ... , g_1
 g_2  \cdots  g_{n-1}\} \subset \Sigma \cup
{\Sigma}^{-1},$$ where $g_1  g_2 \cdots g_n \in \{ h, h^{-1}\}.$
Hence, we have two possibilities. In the first one $g_1  g_2
\cdots g_n = h,$ and in the second one $g_1  g_2  \cdots  g_n =
h^{-1}.$
 Now  observe that   the
set $$\{h,g_n^{-1},
 g_{n-1}^{-1},  ... , g_2^{-1}\}$$  gives us a connection from  $h$ to $g$ for the first possibility and   $\{h,g_n,
 g_{n-1},  ... , g_2\}$ for the second one. Hence  $\sim$ is symmetric.

Finally, suppose that $g \sim h$ and $h \sim k$, and
 write $\{g_1,g_2,...,g_n\}$ for a connection from
$g$ to $h$ and $\{h_1,h_2,...,h_m\}$ for a connection from $h$ to
$k$. If $h \notin \{ k, k^{-1}\}$, then $m \geq 1$ and so $\{g_1,
g_2,...,g_n,h_2,..., h_m\}$ (resp. $\{g_1,g_2,...,
g_n,h_2^{-1},...,h_m^{-1}\}$) is a connection from $g$ to $k$ if
$g_1g_2\cdots g_n=h$ (resp. $g_1g_2\cdots g_n=h^{-1}$). If $h \in
\{ k, k^{-1}\}$ then, $\{g_1,g_2,..., g_n\}$ is a connection from
$g$ to $k$. Therefore $g \sim k$ and this completes the assertion.
\end{proof}

By the above proposition   we can consider the quotient set
$$\Sigma / \sim=\{[g]: g \in \Sigma\},$$
where $[g]$ denotes the set of elements of $\Sigma$ which are
connected to $g$.
 By the definition of $\sim$, it is clear
that if $h \in [g]$ and $h^{-1} \in \Sigma$ then
 $h^{-1} \in [g]$.

\smallskip

Our next goal in this section is to associate a graded ideal
${E}_{[g]}$ to any $[g]$. Fix $g \in \Sigma$, we start by defining
the set ${E}_{1,[g]} \subset {E}_1$ as follows
 $${E}_{1,[g]}:= span_{\mathbb{K}}\{E_{h}E_{h^{-1}} : h \in
[g]\} \subset E_1.$$ Next, we define
\[
V_{[g]}:=\sideset{}{^\bot}\bigoplus \limits_{h \in [g]} E_{h}
\]
Finally, we denote by ${E}_{[g]}$ the following closed linear
subspace of $E$,
$${E}_{[g]} := cl(E_{1,[g]} \oplus^{\bot} V_{[g]}).$$

\begin{proposition}\label{pro2}
For any $g \in \Sigma$, the   linear subspace ${ E}_{[g]}$ is a
graded  pseudo $H$-subring of $E$.
\end{proposition}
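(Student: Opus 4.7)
The plan is to verify the two conditions in the definition of graded pseudo-$H$-subring: that $E_{[g]}\cdot E_{[g]}\subset E_{[g]}$ and that $E_{[g]} = cl(\sideset{}{^\bot}\bigoplus\limits_{h \in G} F_h)$ for $F_h := E_{[g]} \cap E_h$. For the multiplicative closure, by Lemma \ref{lema_util}(i) it suffices to establish $(E_{1,[g]} + V_{[g]})(E_{1,[g]} + V_{[g]}) \subset E_{1,[g]} + V_{[g]}$ and then take closures. I would split the computation into four types of products. For $E_{1,[g]} \cdot E_{1,[g]}$, a typical element is $(xy)(zw)$ with $x \in E_h$, $y \in E_{h^{-1}}$, $z \in E_k$, $w \in E_{k^{-1}}$ for $h,k \in [g]$; associativity and the grading yield $x(yz) \in E_k$, so the product lies in $E_k E_{k^{-1}} \subset E_{1,[g]}$. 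The cross products $E_{1,[g]} \cdot V_{[g]}$ and $V_{[g]} \cdot E_{1,[g]}$ similarly collapse via the grading into some $E_m$ with $m \in [g]$, hence into $V_{[g]}$.

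The crux is the $V_{[g]} \cdot V_{[g]}$ case: for $h, k \in [g]$, either $hk = 1$ (so $E_h E_k = E_h E_{h^{-1}} \subset E_{1,[g]}$), or $hk \in \Sigma$ (assuming the product is nontrivial), in which case I must show $hk \in [g]$, so that $E_h E_k \subset E_{hk} \subset V_{[g]}$. Given a connection $\{g_1,\ldots,g_n\}$ from $g$ to $h$ with $g_1\cdots g_n = h^\epsilon$ (where $\epsilon = \pm 1$), I would extend it to $\{g_1,\ldots,g_n, k^\epsilon\}$: the first $n$ partial products lie in $\Sigma \cup \Sigma^{-1}$ by hypothesis, and $k^\epsilon \in \Sigma \cup \Sigma^{-1}$ since $k \in [g] \subset \Sigma$; the final product is $h^\epsilon k^\epsilon = (hk)^\epsilon$ (using that $G$ is abelian), which lies in $\{hk, (hk)^{-1}\}$. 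Thus $g \sim hk$, and so $hk \in [g]$.

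Finally, for the graded decomposition, since $E_h \subset V_{[g]} \subset E_{[g]}$ for each $h \in [g]$, one gets $F_h = E_{[g]} \cap E_h = E_h$ for all $h \in [g]$, while $F_1 \supset E_{1,[g]}$ by construction. Hence $E_{1,[g]} \oplus^{\bot} V_{[g]} \subset \sideset{}{^\bot}\bigoplus\limits_{h \in G} F_h \subset E_{[g]}$, and taking closures on both sides yields $E_{[g]} = cl(\sideset{}{^\bot}\bigoplus\limits_{h \in G} F_h)$. The orthogonality of the direct sum is inherited from the grading on $E$, and each $F_h$ is closed as the intersection of the two closed subspaces $E_{[g]}$ and $E_h$, completing the proof.
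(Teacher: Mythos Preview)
Your proof is correct and follows essentially the same approach as the paper's: both establish $(E_{1,[g]} \oplus^{\bot} V_{[g]})(E_{1,[g]} \oplus^{\bot} V_{[g]}) \subset E_{1,[g]} \oplus^{\bot} V_{[g]}$ by splitting into the four product types, handling the key case $V_{[g]}V_{[g]}$ via the same connection-extension argument, and then invoking Lemma~\ref{lema_util}(i). The paper in fact only verifies the multiplicative closure; your extra paragraph checking the graded decomposition $E_{[g]} = cl\bigl(\bigoplus^{\bot}_{h\in G}(E_{[g]}\cap E_h)\bigr)$ is a welcome detail that the paper leaves implicit.
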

\begin{proof}
We have
\begin{equation}\label{cero}
(E_{1,{[g]}} \oplus^{\bot} V_{[g]})(E_{1,{[g]}} \oplus^{\bot}
V_{[g]}) \subset E_{1,{[g]}}E_{1,{[g]}} + E_{1,{[g]}}V_{[g]} +
V_{[g]} E_{1,{[g]}} + V_{[g]}V_{[g]}.
\end{equation}

 Let us consider the last summand $V_{[g]}V_{[g]}$ in
(\ref{cero}). Given $h, k \in [g]$ such that $E_{h}E_{k} \neq 0$,
if $k = h^{-1}$ then, clearly $E_{h}E_{k} = E_{h}E_{{h}^{-1}}
\subset E_{1,{[g]}}$. Suppose that $k \neq h^{-1}$ and consider a
connection $\{g_1,...,g_n\}$ from $g$ to $h$. Since $E_{h}E_{k}
\neq 0$ implies $hk \in \Sigma$, we get that $\{g_1,...,g_n,k\}$
is a connection from $g$ to $hk$, in case $g_1 \cdots g_n = h$ and
$\{g_1 \cdots g_n, k^{-1}\}$, the respective one, in case $g_1
\cdots g_n = h^{-1}$. So $hk \in [g]$ and thus $E_{h}E_{k} \subset
E_{hk} \subset V_{[g]}$. Therefore, $(\sideset{}{^\bot}\bigoplus
\limits_{h \in {[g]}} E_{h})(\sideset{}{^\bot}\bigoplus \limits_{h
\in {[g]}} E_{h}) \subset E_{1,{[g]}} \oplus^{\bot} V_{[g]}$, that
is,
\begin{equation}\label{eq0.5}
V_{[g]}V_{[g]} \subset E_{1,{[g]}} \oplus^{\bot} V_{{[g]}}.
\end{equation}
Consider now the first summand $E_{1,{[g]}}E_{1,{[g]}}$ in
(\ref{cero}). By associativity, given $h, k \in [g]$, we have
$(E_{h}E_{h^{-1}})(E_{k}E_{k^{-1}}) \subset (E_{h}E_{h^{-1}})\cap
(E_{k}E_{k^{-1}}) \subset E_{1,{[g]}}.$ Hence,
\begin{equation}\label{eq0.75}
E_{1,{[g]}}E_{1,{[g]}} \subset E_{1,{[g]}}.
\end{equation}
Similarly, we show
\begin{equation}\label{eq0.8}
E_{1,{[g]}}V_{[g]} + V_{[g]}E_{1,{[g]}} \subset V_{[g]}.
\end{equation}
From the relations (\ref{cero}), (\ref{eq0.5}),(\ref{eq0.75}) and
(\ref{eq0.8}), we get $$(E_{1,{[g]}} \oplus^{\bot}
V_{[g]})(E_{1,{[g]}} \oplus^{\bot} V_{[g]}) \subset E_{1,{[g]}}
\oplus^{\bot} V_{[g]}.$$
 Finally, Lemma \ref{lema_util}-(i) completes the proof.
\end{proof}

\begin{lemma}\label{1111}
If $[g] \neq [h]$ for some $g, h \in \Sigma$ then ${E}_{[g]}
{E}_{[h]}=0$.
\end{lemma}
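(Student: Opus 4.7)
The plan is to reduce the lemma to an algebraic statement about the (non-closed) subspaces $E_{1,[g]} \oplus^\bot V_{[g]}$, and then to apply Lemma \ref{lema_util}-(i) to pass to closures. Specifically, since $E_{[g]} = cl(E_{1,[g]} \oplus^\bot V_{[g]})$ and similarly for $[h]$, it suffices to verify the \emph{algebraic} identity
\[
(E_{1,[g]} \oplus^\bot V_{[g]})(E_{1,[h]} \oplus^\bot V_{[h]}) = 0
\]
when $[g] \neq [h]$; then Lemma \ref{lema_util}-(i) gives $E_{[g]} E_{[h]} \subset cl(0) = 0$.

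The central technical step, which I expect to be the main obstacle, is the following connection principle: \emph{if $k, l \in \Sigma$ satisfy $E_k E_l \neq 0$, then $k \sim l$.} To prove this, note that $E_k E_l \subset E_{kl}$, so $E_{kl} \neq 0$ forces either $kl = 1$ or $kl \in \Sigma$. In the first case $l = k^{-1}$, and the singleton $\{k\}$ is already a connection from $k$ to $k^{-1} = l$. In the second case, observing that $G$ is abelian, the sequence $\{k, l\}$ is a connection from $k$ to $kl$ (the only partial product to check is $g_1 = k \in \Sigma$, and $g_1 g_2 = kl \in \Sigma$ is the final product) and symmetrically $\{l, k\}$ connects $l$ to $kl$; transitivity (Proposition \ref{pro1}) then yields $k \sim l$.

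With this principle in hand, I would expand the product into the four summands
\[
V_{[g]} V_{[h]},\quad E_{1,[g]} V_{[h]},\quad V_{[g]} E_{1,[h]},\quad E_{1,[g]} E_{1,[h]},
\]
and kill each one in turn, in every case by deriving $[g] = [h]$ as a contradiction. For $V_{[g]}V_{[h]}$: a nonzero product $E_k E_l \neq 0$ with $k \in [g]$, $l \in [h]$ would give $k \sim l$ by the principle above, so $[g] = [k] = [l] = [h]$. For $E_{1,[g]} V_{[h]}$: if $E_k E_{k^{-1}} E_l \neq 0$ with $k \in [g]$, $l \in [h]$, then by associativity $E_{k^{-1}} E_l \neq 0$ (and $E_{k^{-1}} \neq 0$ forces $k^{-1} \in \Sigma$, whence $k^{-1} \in [g]$), so the principle gives $k^{-1} \sim l$, hence again $[g] = [h]$. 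The term $V_{[g]} E_{1,[h]}$ is handled symmetrically. For $E_{1,[g]} E_{1,[h]}$: a nonzero product $E_k E_{k^{-1}} E_l E_{l^{-1}} \neq 0$ yields, by associativity, $E_k (E_{k^{-1}} E_l) E_{l^{-1}} \neq 0$, so $E_{k^{-1}} E_l \neq 0$, and we conclude as before.

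Once all four summands are shown to be zero, combining them with Lemma \ref{lema_util}-(i) completes the argument. The only subtlety I anticipate is the bookkeeping in the connection principle (checking the partial-product condition and the abelian rewriting $kl = lk$), but there is no analytic difficulty since all non-closure manipulations are purely algebraic, and the passage to closures is handled uniformly by Lemma \ref{lema_util}-(i).
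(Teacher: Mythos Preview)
Your proposal is correct and follows essentially the same route as the paper: expand into the four summands, kill $V_{[g]}V_{[h]}$ by producing a connection from the nonvanishing product, reduce the remaining three summands to this case via associativity, and then invoke Lemma~\ref{lema_util}-(i) to pass to closures. The only cosmetic difference is that you factor out the ``connection principle'' and build the connection via $kl$ and transitivity, whereas the paper writes down the single three-term connection $\{g_1,h_1,g_1^{-1}\}$ directly.
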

\begin{proof}
 We have $$(E_{1,{[g]}} \oplus^{\bot}
V_{[g]})(E_{1,{[h]}} \oplus^{\bot} V_{[h]}) \subset$$

\begin{equation}\label{cuatro}
E_{1,{[g]}}E_{1,{[h]}} + E_{1,{[g]}}V_{[h]} + V_{[g]}E_{1,{[h]}} +
V_{[g]}V_{[h]}.
\end{equation}
Consider the above last summand $V_{[g]}V_{[h]}$ and suppose there
exist $g_1 \in [g]$ and $h_1 \in [h]$ such that
$E_{g_1}E_{h_1}\neq 0$. Since $g_1 \neq h_1^{-1}$, then $g_1 h_1
\in \Sigma$. So $\{g_1,h_1,g_1^{-1}\}$ is a connection between
$g_1$ and $h_1$. By the transitivity of the connection relation,
we have $h \in [g]$, that is a contradiction. Hence
$E_{g_1}E_{h_1} = 0$ and thus
\begin{equation}\label{nueve}
V_{[g]}V_{[h]} = 0.
\end{equation}
Consider now the first summand $E_{1,[g]}E_{1,[h]}$ of
(\ref{cuatro}) and suppose there exist $g_1 \in [g]$ and $h_1 \in
[h]$ so that $(E_{g_1}E_{g_1^{-1}})(E_{h_1}E_{h_1^{-1}}) \neq 0$.
We have $E_{g_1}(E_{g_1^{-1}}E_{h_1})E_{h_1^{-1}} \neq 0$ and so
$E_{g_1^{-1}}E_{h_1} \neq 0,$ that contradicts (\ref{nueve}).
Hence $E_{1,{[g]}}E_{1,{[h]}}=0.$ Arguing in a similar way, we
also get
$$E_{1,{[g]}}V_{[h]}+V_{[g]}
E_{1,{[h]}} = 0.$$ From (\ref{cuatro}) we get
$$(E_{1,{[g]}} \oplus^{\bot} V_{[g]})(E_{1,{[h]}}
\oplus^{\bot} V_{[h]}) = 0.$$ Applying Lemma \ref{lema_util}, we
finally get $E_{[g]}E_{[h]} = 0$.
\end{proof}

\begin{theorem}\label{teo1}
In any pseudo-$H$-ring $E$  the following assertions hold.
\begin{enumerate}
\item[(i)] For any $g \in \Sigma$, the linear subspace
$$E_{[g]} = cl(E_{1,{[g]}} \oplus^{\bot} V_{{[g]}})$$ of
$E$ associated to $[g]$ is a graded ideal of $E$.

\item[(ii)] If $E$ is graded simple, then there exists a
connection between any two elements of  ${\Sigma}$ and $E_1 =
cl(span_{\mathbb{K}} \{E_gE_{g^{-1}}: g \in \Sigma\})$.
\end{enumerate}
\end{theorem}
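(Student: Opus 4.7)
The plan is to build on Proposition \ref{pro2}, which already shows that $E_{[g]}$ is a graded pseudo-$H$-subring; for (i) I need only verify the ideal conditions $E\cdot E_{[g]}\subset E_{[g]}$ and $E_{[g]}\cdot E\subset E_{[g]}$. Using the grading $E=cl(E_1\oplus^{\bot}(\sideset{}{^\bot}\bigoplus_{h\in\Sigma}E_h))$, I would compute $E_k\cdot(E_{1,[g]}\oplus^{\bot}V_{[g]})$ case by case on $k$. For $k=1$, associativity together with $E_1E_h\subset E_h$ gives both $E_1\cdot E_hE_{h^{-1}}\subset E_hE_{h^{-1}}\subset E_{1,[g]}$ and $E_1\cdot V_{[g]}\subset V_{[g]}$. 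For $k\in[g]$, the needed inclusions are already contained in the computations of Proposition \ref{pro2}. For $k\in\Sigma\setminus[g]$, Lemma \ref{1111} applies, because $E_k\subset E_{[k]}$ with $[k]\ne[g]$, making the product zero. Passing to closure via Lemma \ref{lema_util}(i) then yields $E\cdot E_{[g]}\subset E_{[g]}$; the symmetric inclusion $E_{[g]}\cdot E\subset E_{[g]}$ is analogous.

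For (ii), fix any $g\in\Sigma$. Since $E_g\ne 0$ and $E_g\subset E_{[g]}$, part (i) combined with graded simplicity forces $E_{[g]}=E$, and I would extract both conclusions from this equality by orthogonal comparison. Applying Lemma \ref{lema_util}(iii) with the closed subspace $E_1$ gives $E_{[g]}\subset E_1\oplus cl(V_{[g]})$. If some $h\in\Sigma$ lay outside $[g]$, then $E_h$ would be orthogonal both to $E_1$ and to every $E_k$ with $k\in[g]$; Cauchy--Schwarz for the pseudo-inner products extends this orthogonality from $V_{[g]}$ to its closure, so any element of $E_h\subset E_{[g]}$ would decompose against the above sum into pieces each orthogonal to itself, hence would be zero. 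This contradicts $h\in\Sigma$, so $\Sigma=[g]$ and any two elements of $\Sigma$ are connected.

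Knowing $\Sigma=[g]$ gives $V_{[g]}=\sideset{}{^\bot}\bigoplus_{h\in\Sigma}E_h$. Parts (ii) and (iii) of Lemma \ref{lema_util} then rewrite both sides as orthogonal sums of closed subspaces:
\[
E=E_1\oplus cl(V_{[g]})\qquad\text{and}\qquad E_{[g]}=cl(E_{1,[g]})\oplus cl(V_{[g]}).
\]
Since $E=E_{[g]}$ and $cl(E_{1,[g]})\subset E_1$, a direct comparison of these decompositions forces $cl(E_{1,[g]})=E_1$, which is the stated identity. The main technical issue throughout is keeping the interaction between orthogonal direct sums and closures under control, especially the fact that orthogonality survives closure and that orthogonal sums of closed subspaces are again closed; but Lemma \ref{lema_util} is designed precisely to dispatch these points.
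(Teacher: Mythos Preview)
Your proof is correct and follows essentially the same route as the paper's. For (i) you split $E$ according to $k=1$, $k\in[g]$, and $k\notin[g]$ and invoke Proposition~\ref{pro2}, Lemma~\ref{1111}, and Lemma~\ref{lema_util}(i) exactly as the paper does (the paper happens to verify the right-ideal inclusion first, but the structure is identical). For (ii) the paper simply asserts that $E_{[g]}=E$ ``easily'' yields $[g]=\Sigma$ and $E_1=cl(E_{1,[g]})$, whereas you supply the orthogonality argument via Lemma~\ref{lema_util}(ii)--(iii) that actually justifies this; your added detail is sound and in fact fills in what the paper leaves implicit.
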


\begin{proof}
(i) We first observe that by the grading (see Definition
\ref{definition1})
\begin{equation}\label{contencion1}
E_{h}E_{(h)^{-1}}E_1 \subset E_{h}E_{(h)^{-1}}
\end{equation}
and
\begin{equation}\label{contencion2}
E_{h}E_1 \subset E_{h}.
\end{equation}
Let us prove that $E_{[g]}E_1 \subset E_{[g]}$. From
(\ref{contencion1}), we obtain $E_{1,[g]}E_1 \subset E_{1,[g]} $,
and taking into account (\ref{contencion2}), we get $V_{[g]}E_1
\subset  V_{[g]}$. Therefore, $(E_{1,[g]} \oplus^{\bot}
V_{[g]})E_1 \subset E_{1,[g]} \oplus^{\bot} V_{[g]}$. Taking
closure, by Lemma \ref{lema_util}-(i) and the fact $E_1$ is
closed, we have
$$E_{[g]}E_1 \subset E_{[g]}.$$

Taking into account the above observation, Proposition \ref{pro2}
and Lemma \ref{1111}, we have $$E_{[g]}(E_1 \oplus^{\bot}
(\sideset{}{^\bot}\bigoplus\limits_{h \in [g]}E_{h}) \oplus^{\bot}
(\sideset{}{^\bot}\bigoplus\limits_{k \notin [g]}E_{k})) \subset
E_{[g]}.$$ Hence, Lemma \ref{lema_util} and the equality $$E =
cl(E_1 \oplus^{\bot} (\sideset{}{^\bot}\bigoplus\limits_{h \in
[g]} E_{h}) \oplus^{\bot} (\sideset{}{^\bot}\bigoplus\limits_{k
\notin [g]}E_{k}))$$ finally give $E_{[g]}E \subset E_{[g]}$.

In a similar way, we get $EE_{[g]} \subset E_{[g]}$ and so
$E_{[g]}$ is a graded ideal of $E$.

(ii) The graded simplicity of $E$ implies $E_{[g]} = E$. From
here, it is easy to get $[g] = \Sigma$ and $E_1 =
cl(span_{\mathbb{K}} \{E_gE_{g^{-1}}: g \in \Sigma\})$.
\end{proof}

\begin{theorem}\label{teo2}
Let $E$ be a pseudo-$H$-ring. Then for an orthogonal complement
$U$ of
$$cl(span_{\mathbb{K}}\{E_gE_{g^{-1}} : g \in \Sigma\})$$ in
$E_1$, we have
$$E = cl(U + \sum\limits_{[g] \in \Sigma/\sim}E_{[g]})$$ where
any $E_{[g]}$ is one of the (closed) graded ideals of $E$
described in Theorem \ref{teo1}-(i), satisfying $E_{[g]}E_{[h]} =
0$ if $[g] \neq [h].$
\end{theorem}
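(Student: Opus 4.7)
The plan is to reduce the statement to a careful bookkeeping of the grading decomposition, using the equivalence classes of $\sim$ to partition $\Sigma$, and then invoking Theorem \ref{teo1} together with Lemma \ref{lema_util} to handle the closures. The three things that actually need proof are (a) that the given sum exhausts $E$ (up to closure), (b) that each $E_{[g]}$ is a graded ideal, and (c) that $E_{[g]}E_{[h]}=0$ when $[g]\neq[h]$. The latter two are already contained in Theorem \ref{teo1}-(i) and Lemma \ref{1111}, so the real content is (a).

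For (a) I would first use that the equivalence classes $\{[g]\}_{[g]\in\Sigma/\sim}$ partition $\Sigma$. Since the summands $E_g$ are pairwise orthogonal, this lets us rewrite
\[
\sideset{}{^\bot}\bigoplus_{g\in\Sigma}E_g \;=\; \sideset{}{^\bot}\bigoplus_{[g]\in\Sigma/\sim}V_{[g]},
\]
where $V_{[g]}=\bigoplus^{\bot}_{h\in[g]}E_h$. At the same time, every generator $E_hE_{h^{-1}}$ of $cl(span_{\mathbb{K}}\{E_gE_{g^{-1}}:g\in\Sigma\})$ lies in exactly one $E_{1,[g]}$, so
\[
cl\Bigl(\sum_{[g]\in\Sigma/\sim}E_{1,[g]}\Bigr)=cl\bigl(span_{\mathbb{K}}\{E_gE_{g^{-1}}:g\in\Sigma\}\bigr).
\]
Choosing $U$ as an orthogonal complement of the right-hand side inside $E_1$ gives $E_1=U\oplus^{\bot}cl(\sum_{[g]}E_{1,[g]})$.

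Now I would combine these two facts in the canonical expansion of $E$:
\[
E \;=\; cl\Bigl(E_1 \oplus^{\bot}\sideset{}{^\bot}\bigoplus_{g\in\Sigma}E_g\Bigr)
\;=\; cl\Bigl(U \;+\; \sum_{[g]\in\Sigma/\sim}\bigl(E_{1,[g]}\oplus^{\bot}V_{[g]}\bigr)\Bigr),
\]
and then absorb the inner closures defining $E_{[g]}=cl(E_{1,[g]}\oplus^{\bot}V_{[g]})$ into the outer closure, yielding $E=cl(U+\sum_{[g]}E_{[g]})$. The passage between $E_{[g]}$ and the pre-closure sum $E_{1,[g]}\oplus^{\bot}V_{[g]}$ inside a single outer closure is harmless because closure is idempotent.

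The main obstacle, and the only genuinely delicate point, is keeping track of which sums are orthogonal and which are not once the subspaces $E_{1,[g]}$ enter the picture: the $V_{[g]}$'s are mutually orthogonal by construction, but the $E_{1,[g]}$'s all live inside $E_1$ and there is no orthogonality among them in general. This is precisely why the statement uses the unadorned symbol $+$ rather than $\oplus^{\bot}$ inside the outer closure. To justify the rearrangements above rigorously, I would lean on Lemma \ref{lema_util}-(ii),(iii) to move closures across orthogonal direct sums whenever needed (in particular, to write $cl(E_1\oplus^{\bot}\bigoplus^{\bot}_{g\in\Sigma}E_g)=E_1\oplus^{\bot}cl(\bigoplus^{\bot}_{g\in\Sigma}E_g)$ and similarly for $V_{[g]}$), after which the identification with $cl(U+\sum_{[g]}E_{[g]})$ becomes formal. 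The assertions $E_{[g]}E_{[h]}=0$ and that each $E_{[g]}$ is a graded ideal are then quoted directly from Lemma \ref{1111} and Theorem \ref{teo1}-(i).
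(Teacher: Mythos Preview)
Your proposal is correct and follows essentially the same approach as the paper: partition $\Sigma$ by $\sim$, recognize that $E_1\oplus^{\bot}\bigoplus^{\bot}_{g\in\Sigma}E_g$ coincides (up to closure) with $U+\sum_{[g]}E_{[g]}$, and then quote Theorem~\ref{teo1}-(i) and Lemma~\ref{1111} for the ideal and annihilation claims. The paper compresses your step (a) into the single line $E_1 \oplus^{\bot}(\bigoplus^{\bot}_{g\in\Sigma}E_g)=U+\sum_{[g]}E_{[g]}$ without spelling out the closure bookkeeping, so your more careful treatment of that point (and your observation that the $E_{1,[g]}$'s need not be mutually orthogonal) is if anything an improvement in rigor.
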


\begin{proof}
By Proposition \ref{pro1}, we can consider the quotient set
$\Sigma/\sim := \{[g] : g \in \Sigma\}.$ For any $[g] \in
\Sigma/\sim$ we know that  $ E_{[g]}$ is well defined and, by
Theorem \ref{teo1}-(i), it is a graded ideal of $E$. We also have
$E_1 \oplus^{\bot} (\bigoplus^{\bot}\limits_{g \in \Sigma}E_g) = U
+ \sum\limits_{[g] \in \Sigma/\sim}E_{[g]}$ and so
$$E = cl(U + \sum\limits_{[g] \in \Sigma/\sim}E_{[g]}).$$ By
applying Proposition \ref{pro2}-(ii), we get $E_{[g]}E_{[h]} = 0$
if $[g] \neq [h].$
\end{proof}

The linear subspace $E_1$ of $E$, associated to $1 \in G$, plays a
special role in any graded pseudo-$H$-ring $E = E_1 \oplus^{\bot}
cl(\sideset{}{^\bot}\bigoplus_{g\in \Sigma}E_g).$ Hence, in order
to obtain deeper structural descriptions of $E$ we have to
consider graded pseudo-$H$-rings in which $E_1$ and the
(pseudo-)inner products $\{\langle , \rangle_{\alpha}\}_{\alpha
\in I}$ of $E$ are compatible in a sense. From here, we introduce
the following notion motivated by the compatibility condition
between the inner product, the involution and the multiplication
which characterize a classical $H^*$-algebra (\cite{Ambrose}) and
its generalizations like Ambrose algebras (\cite{Marina1, Marina2,
Marina3}).

\begin{definition}\label{cohe}
We say that a graded pseudo-$H$-ring $(E, (\langle  \cdot, \cdot
\rangle_\alpha)_{\alpha \in I})$ has a {\it coherent 1-homogeneous
space} if $E_1 = cl(span_{\mathbb{K}} \{E_gE_{g^{-1}}: g \in
\Sigma\})$ and the following relation holds
$$\langle E_gE_{g^{-1}},E_{h}E_{h^{-1}} \rangle_{\alpha} = \langle E_g,E_{h}E_{h^{-1}}E_g \rangle_{\alpha}$$ for any $g, h \in
G$ and $\alpha \in I$.
\end{definition}

Graded classical $H^*$-algebras are examples of graded
pseudo-$H$-ring with coherent   1-homogeneous spaces. The graded
pseudo-$H$-rings in Example \ref{ex5} below are also examples of
graded pseudo-$H$-rings having coherent  1-homogeneous spaces.

\smallskip

\begin{theorem}\label{coro1}
Let $E$ be a pseudo-$H$-ring. If $E$ has a coherent 1-homogeneous
space,   then $$E = cl(\sideset{}{^\bot}\bigoplus_{[g] \in
\Sigma/\sim} E_{[g]}). $$ Namely, $E$ is the topological
orthogonal direct sum of the (closed) graded ideals given in
Theorem \ref{teo1}.
\end{theorem}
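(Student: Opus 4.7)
By Theorem \ref{teo2} we already have
$$E = cl\bigl(U + \sum_{[g]\in\Sigma/\sim} E_{[g]}\bigr),$$
where $U$ is an orthogonal complement in $E_1$ of $cl(span_{\mathbb{K}}\{E_gE_{g^{-1}} : g\in\Sigma\})$. The coherent hypothesis identifies this closed span with the whole of $E_1$, so $U$ is a closed subspace of $E_1$ orthogonal to $E_1$ itself; since $E$ is Hausdorff, this forces $U=\{0\}$. It therefore suffices to upgrade the (a priori non-orthogonal) sum $\sum_{[g]} E_{[g]}$ to an orthogonal one, i.e.\ to show $E_{[g]}\perp E_{[h]}$ whenever $[g]\neq [h]$ in $\Sigma/\sim$.

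Recalling $E_{[g]}= cl(E_{1,[g]}\oplus^\bot V_{[g]})$ with $V_{[g]}=\sideset{}{^\bot}\bigoplus_{k\in[g]} E_k$, I would check the four cross-orthogonalities separately. Three of them are immediate from the ambient grading: $V_{[g]}\perp V_{[h]}$ because $[g]\cap[h]=\emptyset$ inside the orthogonal decomposition $\sideset{}{^\bot}\bigoplus_{k\in\Sigma} E_k$, and $E_{1,[g]}\perp V_{[h]}$ together with $V_{[g]}\perp E_{1,[h]}$ because $E_{1,[g]},E_{1,[h]}\subset E_1$ while $E_1\perp E_k$ for every $k\in\Sigma$. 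The decisive remaining case is $E_{1,[g]}\perp E_{1,[h]}$, and this is exactly where the coherent identity has to do the work.

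For this step, fix $g_0\in[g]$, $h_0\in[h]$ and homogeneous $u\in E_{g_0}$, $v\in E_{g_0^{-1}}$, $x\in E_{h_0}$, $y\in E_{h_0^{-1}}$. Unpacking the coherent identity on these vectors rewrites $\langle uv, xy\rangle_\alpha$ as an inner product whose second argument is a triple product of the form $(xy)\cdot w$ with $w\in E_{g_0}\cup E_{g_0^{-1}}$. Now $xy\in E_{h_0}E_{h_0^{-1}}\subset E_{1,[h]}\subset E_{[h]}$, while $w\in V_{[g]}\cup E_{1,[g]}\subset E_{[g]}$ (using that $g_0^{-1}\in[g]$ whenever $g_0^{-1}\in\Sigma$, otherwise $v=0$ and there is nothing to do). Thus $(xy)w\in E_{[h]}E_{[g]}$, which vanishes by Lemma \ref{1111}, so $\langle uv, xy\rangle_\alpha =0$. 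Bilinearity then gives $E_{1,[g]}\perp E_{1,[h]}$. Assembling the four pieces yields $(E_{1,[g]}\oplus^\bot V_{[g]})\perp (E_{1,[h]}\oplus^\bot V_{[h]})$, and since each $\langle \cdot,\cdot\rangle_\alpha$ is continuous in both arguments, orthogonality passes to the closures, so $E_{[g]}\perp E_{[h]}$. Combined with the first paragraph this gives $E=cl(\sideset{}{^\bot}\bigoplus_{[g]\in\Sigma/\sim} E_{[g]})$. The point I expect to be the main obstacle is the correct element-wise reading of the coherent identity, which is stated between subspaces rather than on individual homogeneous vectors; once that translation is pinned down, Lemma \ref{1111} closes the argument cleanly.
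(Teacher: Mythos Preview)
Your proposal is correct and follows essentially the same route as the paper's own proof: both use Theorem~\ref{teo2} together with the hypothesis $E_1 = cl(span_{\mathbb{K}}\{E_gE_{g^{-1}}:g\in\Sigma\})$ to kill $U$, and both reduce the remaining orthogonality $E_{1,[g]}\perp E_{1,[h]}$ to the vanishing $E_{[h]}E_{[g]}=0$ of Lemma~\ref{1111} via the coherent identity. The paper writes this step at the subspace level as $\langle E_{g_1}E_{g_1^{-1}},E_{g_2}E_{g_2^{-1}}\rangle_\alpha = \langle E_{g_1},E_{g_2}(E_{g_2^{-1}}E_{g_1})\rangle_\alpha = 0$, which is exactly your element-wise argument; your worry about translating the subspace identity to vectors is thus not an obstacle, since the paper itself stays at that level. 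Your treatment is in fact slightly more careful than the paper's, in that you make explicit the three ``easy'' orthogonalities and the passage to closures by continuity, which the paper leaves implicit.
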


\begin{proof}
Taking into account Theorem \ref{teo2}, we clearly have from the
fact $$E_1 = cl(span_{\mathbb{K}} \{E_gE_{g^{-1}}: g \in
\Sigma\})$$ that $E = cl(\sum\limits_{[g] \in \Sigma/\sim}
E_{[g]})$. Since $E_1$ is coherent, and $E_{[g]}E_{[h]} = 0$, for
$[g] \neq [h]$ (Lemma \ref{1111}), we get
$$\langle E_{g_1}E_{g_1^{-1}},E_{g_2}E_{g_2^{-1}} \rangle_{\alpha} =
\langle E_{g_1},E_{g_2}(E_{g_2^{-1}}E_{g_1}) \rangle_{\alpha} =
0$$ for any $g_1 \in [g], g_2 \in [h]$ and $\alpha \in I$. Hence
the direct sum $\sideset{}{^\bot}\bigoplus\limits_{[g] \in
\Sigma/\sim} E_{1,[g]}$ is orthogonal. So, since $E_{[g]} =
cl(E_{1,[g]} \oplus^{\bot} (\sideset{}{^\bot}\bigoplus\limits_{h
\in [g]}E_{h}))$, we get the orthogonal direct character of the
sum of the ideals $E_{[g]}$, $[g] \in \Sigma/\sim$.
\end{proof}

\section{The graded simple components}\label{section:simplecomponents}

In this section, we study when the components in the
decompositions given in Theorems \ref{teo2} and \ref{coro1} are
graded simple. We begin by introducing the key notions of
$\Sigma$-multiplicativity and maximal length in the context of
graded pseudo-$H$-rings, in a similar way to that for graded
associative algebras, graded  Lie algebras, graded Poisson
algebras and so on. For these notions and examples see
\cite{Yogra, YoPo, Kochetov}.

\begin{definition}
It is said  that a graded pseudo-$H$-ring $E$ is of {\it maximal
length} if $E_1\neq 0$ and $\dim E_g = 1$ for any $g \in \Sigma$.
\end{definition}

\begin{definition}
We say that a graded pseudo-$H$-ring $E$ is {\it
$\Sigma$-multiplicative} if given $g \in \Sigma$ and  $h \in
\Sigma \cup \{1\}$ such that $gh \in \Sigma$, then
$E_gE_{h}+E_hE_{g} \neq 0$.
\end{definition}

We recall that $\Sigma$ is called {\it symmetric} when
$\Sigma=\Sigma^{-1}$ and that the {\it annihilator}  of $E$ is the
set  ${\mathcal Ann}(E) := \{v \in E : vE =0 \text{ and  } Ev =
0\}$. From now on $\Sigma$ will be supposed to be symmetric.

\begin{example}\label{ex5}
Consider the graded  pseudo-$H$-ring $E =
cl(\sideset{}{^\bot}\bigoplus\limits_{g \in G}E_g)$  where $$E= (
\mathbb{C}^{(I \times J) \times (I \times J)}, (\langle  \cdot ,
\cdot \rangle_{\alpha})_{\alpha \in \Lambda})$$ as in Example
\ref{ex2}. Take $I={\mathbb N}$, $J=\{1,2,...,r\}$ a finite set,
$G={\mathbb Q}^{\times}$, (the multiplicative rational group), and
a family of  $r$   sequences of prime natural numbers $\{x_{n,t}
\}_{n \in {\mathbb N}}$ where $t \in J$,  such that $x_{n,t} \neq
x_{m,s}$ when $(n,t) \neq (m,s)$. Define
$$\phi:{\mathbb N} \times J \to {\mathbb Q}^{\times}$$
$$\hspace{9mm}(n,p) \mapsto x_{n,p}$$ Taking
into account (\ref{br}) it is easy to verify that for any $q \in
{\mathbb Q}^{\times}$, $q \neq 1$,  either $E_q=0$ or $E_q=
\mathbb{C}a_{((n,t),(m,t))}$ for (unique) $n,m\in {\mathbb N}$ and
$t \in J$ such that $x_{n,t}^{-1}x_{m,t}=q$. In this case
$E_{q^{-1}}= \mathbb{C}a_{((m,t),(n,t))}$ and thus we get that $E$
is of maximal length and that its support is symmetric.

Since $$E_1=cl(\sideset{}{^\bot}\bigoplus\limits_{n \in {\mathbb
N};\hspace{1mm} t \in J}\mathbb{C}a_{((n,t),(n,t))})\neq 0$$ and
$a_{((n,t),(n,t))}=a_{((n,t),(m,t))}a_{((m,t),(n,t))}$ for any $m
\in {\mathbb N}$ with $m \neq n$, we also get that $E_1 =
cl(\sum\limits_{q \in \Sigma}E_qE_{q^{-1}})$ and
 so $E_1$ is coherent.

In order to verify that $E$ is $\Sigma$-multiplicative, take $q
\in \Sigma$ and   $p \in \Sigma $ such that $qp \in \Sigma$. By
the above we can write $q=x_{n,t}^{-1}x_{m,t}$ and
$p=x_{r,v}^{-1}x_{s,v}$. From here, if $qp \in \Sigma$ then either
$(m,t)=(r,v)$ or $(n,t)=(s,v)$. So, either
$qp=x_{n,t}^{-1}x_{s,t}$ and thus $E_q
E_p=\mathbb{C}a_{((n,t),(m,t))} \mathbb{C}a_{((m,t),(s,t))}=
\mathbb{C}a_{((n,t),(s,t))} \neq 0$ or $pq=x_{r,t}^{-1}x_{m,t}$
and we have $E_p E_q=\mathbb{C}a_{((r,t),(n,t))}
\mathbb{C}a_{((n,t),(m,t))}= \mathbb{C}a_{((r,t),(m,t))} \neq 0.$
If $p=1$, then clearly we have that $a_{((n,t),(m,t))}
a_{((m,t),(m,t))}=a_{((n,t),(m,t))}$ and so $E_q E_1 \neq 0$. Thus
$E$ is $\Sigma$-multiplicative.
\end{example}

\begin{theorem}\label{teo_final}
Let $E$ be a $\Sigma$-multiplicative graded pseudo-$H$-ring of
maximal length  and with ${\mathcal Ann}(E) = 0$. Then $E$ is
graded simple if and only if its support has all of its elements
connected and $E_1 = cl(span_{\mathbb{K}} \{E_gE_{g^{-1}}: g \in
\Sigma\})$.
\end{theorem}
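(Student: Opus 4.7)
The statement is a biconditional, so the proof splits accordingly. The direction $(\Rightarrow)$ requires nothing new: Theorem \ref{teo1}(ii) already asserts that graded simplicity forces a connection between any two elements of $\Sigma$ together with the identity $E_1 = cl(span_{\mathbb{K}} \{E_g E_{g^{-1}} : g \in \Sigma\})$.

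For the converse, I would fix a nonzero graded ideal $J$ of $E$ and aim to prove $J = E$ in three stages. Stage one is to locate a nonzero homogeneous component of $J$ of nontrivial degree. Were $J \subseteq E_1$, the orthogonality of the grading together with ideal-ness would force $J E_g \subseteq J \cap E_g \subseteq E_1 \cap E_g = 0$, and symmetrically $E_g J = 0$, for every $g \in \Sigma$; combining this with the hypothesis $E_1 = cl(span_{\mathbb{K}} \{E_h E_{h^{-1}} : h \in \Sigma\})$ and Lemma \ref{lema_util}(i) would then give $J E_1 = 0 = E_1 J$, hence $J \subseteq {\mathcal Ann}(E) = 0$, a contradiction. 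So $J_{g_0} \neq 0$ for some $g_0 \in \Sigma$, and by maximal length $J_{g_0} = E_{g_0}$, whence $E_{g_0} \subseteq J$.

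Stage two is to propagate $E_{g_0} \subseteq J$ along a connection. Given $h \in \Sigma$, pick $\{g_1, \ldots, g_n\}$ with $g_1 = g_0$ and $g_1 g_2 \cdots g_n \in \{h, h^{-1}\}$. An induction on $k$ shows $E_{g_1 \cdots g_k} \subseteq J$: the inductive step applies $\Sigma$-multiplicativity to $g_1 \cdots g_{k-1}, g_k \in \Sigma$ with product $g_1 \cdots g_k \in \Sigma$ (valid for $k < n$ by Definition \ref{def:connected} and for $k = n$ because $\{h, h^{-1}\} \subseteq \Sigma$ by the symmetry of $\Sigma$), yielding a nonzero element of $J \cap E_{g_1 \cdots g_k}$ which exhausts the one-dimensional space $E_{g_1 \cdots g_k}$. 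At $k = n$ one concludes $E_h \subseteq J$ \emph{or} $E_{h^{-1}} \subseteq J$.

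The step I expect to be the main obstacle is precisely this $h$-versus-$h^{-1}$ ambiguity, since the connection argument above does not distinguish between them. I would resolve it as follows. In either alternative one has $E_h E_{h^{-1}} \subseteq J$, so the hypothesis on $E_1$ together with the closedness of $J$ yields $E_1 \subseteq J$. Now apply $\Sigma$-multiplicativity with $g := h \in \Sigma$ and $h' := 1 \in \Sigma \cup \{1\}$ (whose product $g h' = h$ lies in $\Sigma$) to obtain $E_h E_1 + E_1 E_h \neq 0$; this nonzero subspace of $E_h \cap J$ exhausts $E_h$ by maximal length, for every $h \in \Sigma$. Assembling $E_1 \subseteq J$ and $E_h \subseteq J$ for all $h \in \Sigma$ and passing to the closure forces $J = E$, completing the proof.
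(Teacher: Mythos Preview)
Your proposal is correct and follows essentially the same approach as the paper's proof: invoke Theorem~\ref{teo1}(ii) for the forward direction, then for the converse show the ideal meets some $E_{g_0}$ nontrivially (else it sits in $E_1$ and annihilates $E$), propagate along a connection via $\Sigma$-multiplicativity and maximal length, resolve the $h$/$h^{-1}$ ambiguity by first absorbing $E_1$ through the products $E_hE_{h^{-1}}$, and finally recover each $E_h$ from $E_1E_h+E_hE_1$. The only minor omission is that your stage-one argument implicitly uses associativity to pass from $JE_h=0$ to $J(E_hE_{h^{-1}})=0$ before invoking Lemma~\ref{lema_util}(i), which the paper makes explicit.
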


\begin{proof}
For the first implication, see Theorem \ref{teo1}-(ii). To prove
the converse, consider $I = cl(\bigoplus^{\bot}_{g \in G}I_g)$,
where $I_g := I \cap E_g$, a nonzero graded ideal of $E$. We
denote by
$$\Sigma_I := \{g \in \Sigma : I_g \neq 0\}.$$
 By the maximal length
of the grading, if $g \in \Sigma_I$ then $0 \neq I_g=I \cap
E_g=E_g$ and so we can write $I = I_1 \oplus^{\bot}
cl(\bigoplus^{\bot}_{g \in \Sigma_I}E_g)$ where $I_1=I \cap E_1$.

Observe that $\Sigma_I \neq \emptyset$. Indeed, in the opposite
case $0 \neq I \subset E_1$ and then
$$I(\sideset{}{^\bot}\bigoplus\limits_{g \in \Sigma} E_g) \subset
(\sideset{}{^\bot}\bigoplus\limits_{g \in \Sigma} E_g) \cap E_1 =
0.$$ Therefore $I(\sideset{}{^\bot}\bigoplus\limits_{g \in
\Sigma}E_g) = 0$. In a similar way
$(\sideset{}{^\bot}\bigoplus\limits_{g \in \Sigma} E_g)I = 0$.
Hence, by Lemma \ref{lema_util}
\begin{equation}\label{eq40}
I(cl(\sideset{}{^\bot}\bigoplus \limits_{g \in \Sigma} E_g)) =
(cl(\sideset{}{^\bot}\bigoplus\limits_{g \in \Sigma}E_g))I = 0.
\end{equation}
Thus, the associativity of the product gives $I(E_gE_{g^{-1}}) +
(E_gE_{g^{-1}})I = 0$ and so, since $E_1 = cl(span_{\mathbb{K}}
\{E_gE_{g^{-1}}: g \in \Sigma\})$,  Lemma \ref{lema_util} implies
\begin{equation}\label{eq41}
IE_1 + E_1I = 0.
\end{equation}
From equations (\ref{eq40}) and (\ref{eq41}) we finally get $I
\subset {\mathcal Ann}(E)=0,$ a contradiction.

By the above we can take $g_0 \in \Sigma_I$, so that
\begin{equation}\label{eq_1}
0 \neq E_{g_0} \subset I.
\end{equation}
For  any $h \in \Sigma$ with $h \notin \{g_0,{{g_0}^{-1}}\}$.
Since $g_0$ and $h$ are connected, there is a connection
$\{g_1,g_2,...,g_r\}$ between them, such that
$$\hbox{$g_1 = g_0;$ \hspace{1mm} $g_1g_2,g_1g_2g_3,...,g_1g_2g_3 \cdots g_{r-1} \in \Sigma$ and
$g_1g_2g_3 \cdots g_r \in \{h,h^{-1}\}.$}$$

Consider $g_0 = g_1, g_2$ and $g_1g_2$. The
$\Sigma$-multiplicativity and maximal length of $E$ give $0 \neq
E_{g_0}E_{g_2} + E_{g_2}E_{g_0} = E_{g_0g_2}$. Thus, using
\eqref{eq_1}, we get $0 \neq E_{g_0g_2} \subset I.$

In a similar way, and employing the elements $g_0g_2, g_3$ and
$g_0g_2g_3$ we have $0 \neq E_{g_0g_2g_3} \subset I.$

Continuing this process on the connection $\{g_1,...,g_r\}$ we
obtain that $0 \neq E_{g_0g_2g_3 \cdots g_r} \subset I.$
Therefore, either $0 \neq E_{h} \subset I$ or $0 \neq E_{h^{-1}}
\subset I$ for any $h \in \Sigma.$ Since $E_1 =
cl(span_{\mathbb{K}} \{E_gE_{g^{-1}}: g \in \Sigma\})$  we
conclude
\begin{equation}\label{eq_3}
E_1 \subset I.
\end{equation}
Finally, given any $g \in \Sigma$, the $\Sigma$-multiplicativity
and maximal length of $E$ together with \eqref{eq_3} allow us to
assert that
\begin{equation}\label{eq_4}
0\neq E_g E_1+E_1 E_g=E_g \subset I.
 \end{equation}
From (\ref{eq_3}) and (\ref{eq_4}) we clearly get
 $I = E$. Hence,
$E$ is graded simple.
\end{proof}

We state now our main theorem that it is the second
Wedderburn-type theorem for certain graded pseudo-$H$-rings:

\begin{theorem} 
Let $E$ be a $\Sigma$-multiplicative graded pseudo-$H$-ring of
maximal length and with ${\mathcal Ann}(E) = 0$. If $E_1$ is
coherent then $E$ is the topological orthogonal direct sum of its
minimal (closed) graded ideals $E_{[g]}, g \in G$ . Moreover, each
$E_{[g]}$ is a graded simple, graded pseudo-$H$-ring, such that
the elements of its support are connected.
\end{theorem}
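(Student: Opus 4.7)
The plan is to assemble the theorem from the two preceding main results: Theorem \ref{coro1}, which already supplies the topological orthogonal direct sum
\[
E = cl\Bigl(\sideset{}{^\bot}\bigoplus_{[g]\in \Sigma/\sim} E_{[g]}\Bigr),
\]
and Theorem \ref{teo_final}, which characterises graded simplicity. The real work is then to verify that each summand $E_{[g]}$ inherits the hypotheses of Theorem \ref{teo_final} and, separately, that it is a minimal closed graded ideal of $E$.

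First I would invoke Theorem \ref{coro1} to obtain the decomposition and record that the connectedness of the elements of the support $[g]$ of $E_{[g]}$ is immediate from the very definition of the equivalence relation $\sim$. Next, viewing $E_{[g]}$ as a graded pseudo-$H$-ring with grading group $G$, support $[g]$, and $1$-component $cl(E_{1,[g]})$, I would check each hypothesis of Theorem \ref{teo_final} in turn. The $\Sigma$-multiplicativity of $E_{[g]}$ follows at once from that of $E$, since for $h\in[g]$ and $k\in [g]\cup\{1\}$ with $hk\in[g]$ the sum $E_hE_k+E_kE_h$ is nonzero by hypothesis on $E$ and lies inside $E_{[g]}$. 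Maximal length follows from the fact that each $(E_{[g]})_h=E_h$ with $h\in[g]$ is one-dimensional, together with the identification
\[
(E_{[g]})_1=cl(E_{1,[g]})=cl(span_{\mathbb{K}}\{E_hE_{h^{-1}} : h\in [g]\}),
\]
which simultaneously gives the coherence of $E_{[g]}$. Finally, the vanishing-annihilator condition transfers to $E_{[g]}$: if $x\in E_{[g]}$ annihilates $E_{[g]}$ on both sides, then Lemma \ref{1111} ensures $xE_{[h]}=E_{[h]}x=0$ for every other class $[h]$, and combining this with the orthogonal decomposition of $E$ and Lemma \ref{lema_util} yields $xE=Ex=0$, so $x\in {\mathcal Ann}(E)=0$.

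Once these hypotheses are in place, Theorem \ref{teo_final} applied to $E_{[g]}$ gives graded simplicity. For the minimality statement, a nonzero graded ideal $J$ of $E$ with $J\subseteq E_{[g]}$ is automatically a graded ideal of the ring $E_{[g]}$ itself, because $E_{[h]}E_{[g]}=E_{[g]}E_{[h]}=0$ for $[h]\neq [g]$ by Lemma \ref{1111} makes the remaining action of $E$ on $J$ coincide with the action of $E_{[g]}$; graded simplicity of $E_{[g]}$ then forces $J=E_{[g]}$.

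The most delicate point will be the verification that $(E_{[g]})_1\neq 0$, required for $E_{[g]}$ to be of maximal length. If $E_hE_{h^{-1}}=0$ for every $h\in [g]$, one should argue, using ${\mathcal Ann}(E)=0$ together with $\Sigma$-multiplicativity and the connection structure of $[g]$, that iterated nonzero products among the $E_h$ with $h\in [g]$ must eventually land in the $1$-component, producing a contradiction. Everything else in the proof is essentially bookkeeping with the previous lemmas and theorems.
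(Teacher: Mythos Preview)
Your outline is essentially identical to the paper's proof: invoke Theorem \ref{coro1} for the orthogonal decomposition, transfer the hypotheses of Theorem \ref{teo_final} to each $E_{[g]}$ (connected support, $\Sigma$-multiplicativity, maximal length, zero annihilator via Lemma \ref{1111}), and deduce graded simplicity and hence minimality. The paper also glosses over the point you flag as delicate; a quicker resolution than iterated products is to note that for $h\in[g]$ one has $E_{1,[k]}E_h\subset E_{[k]}E_{[g]}=0$ whenever $[k]\neq[g]$ by Lemma \ref{1111}, so in fact $E_1E_h=(E_{[g]})_1E_h$ and $E_hE_1=E_h(E_{[g]})_1$, whence the $\Sigma$-multiplicativity with $k=1$ transfers directly to $E_{[g]}$ and in particular forces $(E_{[g]})_1\neq 0$.
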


\begin{proof}
By Theorem \ref{coro1}, $E = cl(\oplus^{\bot}_{[g] \in \Sigma /
\sim}E_{[g]})$. Namely, $E$  is the topological orthogonal direct
sum of the ideals
$$E_{[g]} = cl(E_{1,[g]} \oplus^{\bot}
V_{[g]}) = cl(span_{\mathbb{K}}\{E_{h}E_{(h)^{-1}} : h \in [g]\})
\oplus^{\bot} cl(\oplus^{\bot}_{h \in [g]}E_{h}),$$ (see also
Theorem \ref{teo1} and the notation before Proposition
\ref{pro2}). We claim that the support, say $\sum_{E_{[g]}}$, of
$E_{[g]}, g\in G $, has all of its elements connected. Indeed,
since $[g] = [g^{-1}]$ and $E_{[g]}E_{[g]} \subset E_{[g]}$ (see
Proposition \ref{pro2}-1), we easily deduce that $[g]$ has all of
its elements $[g]$-connected (connected through elements in
$[g]$). Besides, the $\Sigma$-multiplicativity of $E$ implies that
of $E_{[g]}, g \in G$. Clearly $E_{[g]}$ is of maximal length.
Moreover, ${\mathcal Ann}(E_{[g]}) =\{0\}$ (the latter denotes the
annihilator of $E_{[g]}$ in itself), this is a consequence of the
fact that $E_{[g]}E_{[h]} = 0$ if $[g] \neq [h]$ (Theorem
\ref{teo2}), and ${\mathcal Ann}(E) = \{0\}$. An application of
Theorem \ref{teo_final} leads to the graded simpleness of
$E_{[g]}$. Thus, we easily get that any of the ideals $E_{[g]}$
are minimal, as well and this finishes the proof.
\end{proof}

\begin{example}\label{ex7}
Let us consider the pseudo-$H$-ring of Example \ref{ex5}. This is
$\Sigma$-multiplicative of maximal length with symmetric support
and it has a coherent $1$-homogeneous subspace. It is easy to
check that ${\mathcal Ann}(E)=0$.  Observe that given any $q,p \in
\Sigma$ with $p \notin \{q, q^{-1}\}$, we can write
$q=x_{n,t}^{-1}x_{m,t}$ with $n \neq m$ and
$p=x_{r,v}^{-1}x_{s,v}$ with $r \neq s$.

Suppose $v=t$.
By fixing some $u,v \in {\mathbb N}$  such that $u \notin \{n,m,r
\}$ and $v \notin \{m,r,s, u \}$ we get that
 the set
$$\{q,x_{u,t}^{-1}x_{n,t}, x_{m,t}^{-1}x_{v,t},x_{r,t}^{-1}x_{u,t},x_{v,t}^{-1}x_{s,t} \}$$ is a connection
from $q$ to $p$.

However, if $v\neq t$, and since $\Sigma=\{x_{n,t}^{-1}x_{m,t}:
n,m \in {\mathbb N}$ with $n \neq m$ and $t \in J\}$, there is not
any connection from $q$ to $p$. We have shown that the equivalence
classes in  $\Sigma / \sim $ are
$[x_{n,t}^{-1}x_{m,t}]=\{x_{r,t}^{-1}x_{s,t}: r,s \in {\mathbb N}$
with $r \neq s \}$ and by applying the results in this section we
can assert that, under the notation $$E_t= cl( (\sum\limits_{ n\in
{\mathbb N}}\mathbb{C}a_{((n,t),(n,t))})  \oplus
(\bigoplus\limits_{ n,m \in {\mathbb N}; n \neq
m}\mathbb{C}a_{((n,t),(m,t))}))$$ for any $t \in J$,  any $E_t$ is
a graded simple, graded pseudo-$H$-ring having  all of the
elements of its support connected. Moreover, $E$ decomposes as the
topological orthogonal direct sum of these family of minimal
graded ideals, namely:
$$E=cl(\sideset{}{^\bot}\bigoplus\limits_{t=1}^r E_t).$$
\end{example}

\medskip

{\bf Acknowledgment.} We would like to thank both of the referees
for the detailed reading of this work and for the suggestions
which have improved the final version of the same.

\bibliographystyle{amsplain}

\end{document}